\numberwithin{equation}{section} 
\numberwithin{figure}{section} 
\theoremstyle{plain}
  \theoremstyle{plain}
  \newtheorem*{thm*}{Theorem}
\theoremstyle{plain}
\newtheorem{thm}{Theorem}
  \theoremstyle{definition}
  \newtheorem{defn}[thm]{Definition}
  \theoremstyle{plain}
  \newtheorem{lem}[thm]{Lemma}
\begin{document}

\title[traveling wave solution]{on the minimal speed and asymptotics of the wave solutions for the
lotka volterra system}

\author{xiaojie hou}

\address{Department of Mathematics and Statistics, University of North Carolina-Wilmington,
Wilmington, NC 28409}

\email{houx@uncw.edu}

\subjclass[2000]{\noindent Primary 35B35, Secondary 35K57, 35B40, 35P15.}

\keywords{Traveling Wave, Existence, Asymptotics, Uniqueness, minimal wave
speed.}
\begin{abstract}
We study the minimal wave speed and the asymptotics of the traveling
wave solutions of a competitive Lotka Volterra system. The existence
of the traveling wave solutions is derived by monotone iteration.
The asymptotic behaviors of the wave solutions are derived by comparison
argument and the exponential dichotomy, which seems to be the key
to understand the geometry and the stability of the wave solutions.
Also the uniqueness and the monotonicity of the waves are investigated
via a generalized sliding domain method.
\end{abstract}
\maketitle
\bigskip{}

\section{\textbf{Introduction and the Main result\label{sec:1}}}

\setcounter{equation}{0}

We study the minimal wave speed and the asymptotic behaviors of the
traveling wave solutions of the following classical Lotka-Volterra
competition system

\begin{equation}
\left\{ \begin{array}{l}
u_{t}=u_{xx}+u(1-u-a_{1}v),\\
\\v_{t}=v_{xx}+rv(1-a_{2}u-v)\end{array}\quad\quad(x,t)\in\mathbb{R}\times\mathbb{R}^{+}\right.\label{eq:1.01}\end{equation}
 where $u=u(x,t),v=v(x,t)$ and $a_{1},\: a_{2},\: r$ are positive
constants. 

In the wave coordinates $\xi=x+ct$, \eqref{eq:1.01} is changed into 

\begin{equation}
\left\{ \begin{array}{l}
u_{\xi\xi}-cu_{\xi}+u(1-u-a_{1}v)=0,\\
\\v_{\xi\xi}-cv_{\xi}+rv(1-a_{2}u-v)=0\end{array}\quad\quad\xi\in\mathbb{R}.\right.\label{eq:1.02}\end{equation}

Fei and Carr \cite{Fei} investigated the traveling wave solutions
and their minimal wave speed of system \eqref{eq:1.02} under the
assumptions:

\medskip{}

\textbf{{[}H1{]}}.\label{[H1]}\qquad{} $0<a_{1}<1<a_{2}$, 

\medskip{}

\textbf{{[}H2{]}}.\label{[H2]} \qquad{}$1-a_{1}\leq r(a_{2}-1)$. 

\medskip{}

Requiring further $r(a_{2}-1)\leq1$, they showed that for each speed
$c\geq2\sqrt{r(a_{2}-1)}$ system \eqref{eq:1.02} admits monotonic
traveling waves $(u(\xi),v(\xi))^{T}$ satisfying the following boundary
conditions:

\begin{equation}
\left(\begin{array}{l}
u\\
v\end{array}\right)(-\infty)=\left(\begin{array}{l}
0\\
1\end{array}\right),\quad\left(\begin{array}{l}
u\\
v\end{array}\right)(+\infty)=\left(\begin{array}{l}
1\\
0\end{array}\right).\label{eq:1.03}\end{equation}

Under additional assumptions $r=1,$ $a_{1}+a_{2}=2$ or $r(a_{2}-1)=1-a_{1}$
they also showed that system \eqref{eq:1.02} has monotonic traveling
wave solutions satisfying \eqref{eq:1.03} for $c\geq2\sqrt{1-a_{1}}$.
However, the question of the minimal wave speed for the wave solutions
of \eqref{eq:1.02}-\eqref{eq:1.03} remains unanswered.

We will prove that the minimal wave speed for \eqref{eq:1.02} is
indeed $2\sqrt{1-a_{1}}$ if the following additional assumption

\smallskip{}

\textbf{{[}H3{]}.\label{[H3]}} $\qquad r(a_{2}-1)<(1-a_{1})(2-a_{1}+r)$. 

\smallskip{}

\noindent is imposed. Noting that if $r=1$ the condition \textbf{{[}H3{]}}
includes Fei and Carr's additional condition.

System \eqref{eq:1.02} has three non-negative equilibria $(0,0)$,
$(0,1)$ and $(1,0)$, with $(0,0)$ and $(0,1)$ unstable and $(1,0)$
stable (\cite{Fei}). For the convenience of later use, we introduce
the transformation $\hat{v}=1-v$ to change system \eqref{eq:1.02}
into local monotone. Upon dropping the hat on the function t $v$
system \eqref{eq:1.02} is changed into \begin{equation}
\left\{ \begin{array}{l}
u_{\xi\xi}-cu_{\xi}+u(1-a_{1}-u+a_{1}v)=0,\\
\\v_{\xi\xi}-cv_{\xi}+r(1-v)(a_{2}u-v)=0\end{array}\quad\xi\in\mathbb{R},\right.\label{eq:1.04}\end{equation}
and the boundary conditions \eqref{eq:1.03} are changed into\begin{equation}
\left(\begin{array}{l}
u\\
v\end{array}\right)(-\infty)=\left(\begin{array}{l}
0\\
0\end{array}\right),\quad\left(\begin{array}{l}
u\\
v\end{array}\right)(+\infty)=\left(\begin{array}{l}
1\\
1\end{array}\right).\label{eq:1.05}\end{equation}
We have,
\begin{thm*}
\label{thm:1}Assuming conditions \textbf{{[}H1{]}-{[}H3{]}}, then
$c^{*}=2\sqrt{1-a_{1}}$ is the minimal wave speed for system \eqref{eq:1.04}-\eqref{eq:1.05},
namely, corresponding to each fixed $c\geq c^{*}$ \eqref{eq:1.04}-\eqref{eq:1.05}
has a unique traveling wave solution $(u(\xi),v(\xi))^{T}$; while
for $0<c<c^{*}$, \eqref{eq:1.04}-\eqref{eq:1.05} does not have
any monotonic traveling wave solutions. Furthermore, the traveling
wave solution has the following asymptotic behaviors:

1. Corresponding to each wave speed $c>2\sqrt{1-a_{1}}$, the traveling
wave solution $(u(\xi),v(\xi))^{T}$ satisfies, as $\xi\rightarrow-\infty$;
\begin{equation}
\left(\begin{array}{c}
u(\xi)\\
\\v(\xi)\end{array}\right)=\left(\begin{array}{c}
A_{1}\\
\\A_{2}\end{array}\right)e^{\frac{c-\sqrt{c^{2}-4(1-a_{1})}}{2}\xi}+o(e^{\frac{c-\sqrt{c^{2}-4(1-a_{1})}}{2}\xi}),\label{eq:1.06}\end{equation}
 While as $\xi\rightarrow+\infty$ we have two cases to deal with:
if $r(a_{2}-1)\leq1$, then \begin{equation}
\left(\begin{array}{c}
u(\xi)\\
\\v(\xi)\end{array}\right)=\left(\begin{array}{c}
1\\
\\{\displaystyle 1}\end{array}\right)-\left(\begin{array}{c}
\bar{A}_{1}\\
\\\bar{A}_{2}\end{array}\right)e^{\frac{c-\sqrt{c^{2}+4r(a_{2}-1)}}{2}\xi}+o(e^{\frac{c-\sqrt{c^{2}+4r(a_{2}-1)}}{2}\xi}),\label{eq:1.07}\end{equation}
and if $r(a_{2}-1)>1$, then \begin{equation}
\left(\begin{array}{c}
u(\xi)\\
\\v(\xi)\end{array}\right)=\left(\begin{array}{c}
1\\
\\{\displaystyle 1}\end{array}\right)-\left(\begin{array}{c}
\hat{A}_{1}e^{\frac{c-\sqrt{c^{2}+4}}{2}\xi}+\hat{A}_{2}e^{\frac{c-\sqrt{c^{2}+4r(a_{2}-1)}}{2}\xi}\\
\\\hat{A}_{3}e^{\frac{c-\sqrt{c^{2}+4r(a_{2}-1)}}{2}\xi}\end{array}\right)+\left(\begin{array}{c}
o(e^{\frac{c-\sqrt{c^{2}+4}}{2}\xi})\\
\\o(e^{\frac{c-\sqrt{c^{2}+4r(a_{2}-1)}}{2}\xi})\end{array}\right)\label{eq:1.08}\end{equation}
where $A_{1}$, $A_{2}$, $\bar{A}_{1}$, $\bar{A}_{2}$, \textup{$\hat{A}_{1}$,
$\hat{A}_{3}$} are positive constants, and $\hat{A}_{2}$ is a real
number. 

2. For the traveling wave with the critical speed $c_{\mbox{ }}^{*}=2\sqrt{1-a_{1}}$,
$(u(\xi),v(\xi))^{T}$ satisfies \begin{equation}
\left(\begin{array}{c}
u(\xi)\\
\\v(\xi)\end{array}\right)=\left(\begin{array}{c}
A_{11c}+A_{12c}\xi\\
\\A_{21c}+A_{22c}\xi\end{array}\right)e^{\sqrt{1-a_{1}}\xi}+o(\xi e^{\sqrt{1-a_{1}}\xi})\label{eq:1.09}\end{equation}
as $\xi\rightarrow-\infty$, and if $r(a_{2}-1)\leq1$, we have\begin{equation}
\begin{array}{cc}
\left(\begin{array}{c}
u(\xi)\\
\\v(\xi)\end{array}\right)= & \left(\begin{array}{c}
1\\
\\{\displaystyle 1}\end{array}\right)-\left(\begin{array}{c}
\bar{A}_{11}\\
\\\bar{A}_{22}\end{array}\right)e^{(\sqrt{1-a_{1}}-\sqrt{1-a_{1}+r(a_{2}-1)})\xi}\\
\\ & +o(e^{(\sqrt{1-a_{1}}-\sqrt{1-a_{1}+r(a_{2}-1)})\xi});\end{array}\label{eq:1.10}\end{equation}

while if $r(a_{2}-1)>1$, we have\begin{equation}
\begin{array}{ccc}
\left(\begin{array}{c}
u(\xi)\\
\\v(\xi)\end{array}\right) & = & \left(\begin{array}{c}
1\\
\\{\displaystyle 1}\end{array}\right)-\left(\begin{array}{c}
\hat{A}_{11}e^{(\sqrt{1-a_{1}}-\sqrt{2-a_{1}})\xi}+\hat{A}_{12}e^{(\sqrt{1-a_{1}}-\sqrt{1-a_{1}+r(a_{2}-1)})\xi}\\
\\\hat{A}_{22}e^{(\sqrt{1-a_{1}}-\sqrt{1-a_{1}+r(a_{2}-1)})\xi}\end{array}\right)\\
\\ &  & +\left(\begin{array}{c}
o(e^{(\sqrt{1-a_{1}}-\sqrt{2-a_{1}})\xi})\\
\\o(e^{(\sqrt{1-a_{1}}-\sqrt{1-a_{1}+r(a_{2}-1)})\xi})\end{array}\right)\end{array}\label{eq:1.11}\end{equation}
 as $\xi\rightarrow+\infty$, where $A_{12c},\, A_{22c}<0$, $A_{11c}$,
$A_{21c}\in\mathbb{R}$ and $\bar{A}_{11},$$\bar{A}_{22}$, $\hat{A}_{12}$,
$\hat{A}_{22}>0$, $\hat{A}_{11}$ is a real number. 
\end{thm*}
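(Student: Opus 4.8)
The plan is to establish the four assertions — existence of a wave for every $c\ge c^{*}$, non-existence of a monotone wave for $0<c<c^{*}$, the sharp asymptotics at $\pm\infty$, and uniqueness — in that order. The structural fact exploited throughout is that, after the substitution $\hat v=1-v$, system \eqref{eq:1.04} is \emph{cooperative} on $[0,1]^{2}$: $\partial_{v}\!\left[u(1-a_{1}-u+a_{1}v)\right]=a_{1}u\ge 0$ and $\partial_{u}\!\left[r(1-v)(a_{2}u-v)\right]=ra_{2}(1-v)\ge 0$ there. Hence the monotone-iteration scheme between ordered sub/super-solutions and the elliptic (strong) maximum principle are available, while \textbf{[H1]--[H2]} guarantee the sign structure ($1-a_{1}>0$, $r(a_{2}-1)>0$, and $1-a_{1}\le r(a_{2}-1)$, so the $(0,0)$ end is the speed-determining one), and \textbf{[H3]} is the genuinely new ingredient.

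For existence, fix $c\ge c^{*}$ and construct an ordered pair of sub/super-solutions of \eqref{eq:1.04}. The super-solution is the constant $(\bar u,\bar v)\equiv(1,1)$, which also confines the iteration to $[0,1]^{2}$ (i.e. keeps the original $v\ge 0$). The sub-solution is built from the leading behaviour predicted by the linearisation at $(0,0)$: for $c>c^{*}$ take $\underline u=\max\{0,\,A_{1}e^{\lambda_{1}\xi}-B_{1}e^{\eta\xi}\}$ with $\lambda_{1}=\tfrac{c-\sqrt{c^{2}-4(1-a_{1})}}{2}$ and $\eta>\lambda_{1}$ close to $\lambda_{1}$, together with a matching $\underline v$ in the ratio $A_{2}/A_{1}=ra_{2}/((1-a_{1})+r)$ forced by the decoupled linear system, while for $c=c^{*}$ one replaces $A_{1}e^{\lambda_{1}\xi}$ by $(-\xi)e^{\sqrt{1-a_{1}}\,\xi}$ plus a lower-order correction (this is the source of the polynomial prefactor in \eqref{eq:1.09}). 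Condition \textbf{[H3]} is what is consumed in checking that the two coupled differential inequalities hold \emph{simultaneously} for this sub-solution and that $(\underline u,\underline v)\le(\bar u,\bar v)$. Monotone iteration then yields a solution; since the sub- and super-solutions are monotone and the scheme preserves monotonicity, the limit wave is monotone, its $-\infty$ limit is $(0,0)$ by the sub-solution, and its $+\infty$ limit is $(1,1)$ by a comparison/Lyapunov argument ruling out the other equilibria.

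For non-existence when $0<c<c^{*}$: suppose a monotone wave existed. Since $u(\xi),v(\xi)\to 0$ as $\xi\to-\infty$, for all sufficiently negative $\xi$ one has $1-a_{1}-u+a_{1}v\ge k$ for some $k$ with $c^{2}<4k<4(1-a_{1})$, so $u''-cu'+ku\ge 0$ on a half-line while $u>0$, $u\to 0$; but $w\mapsto w''-cw'+kw$ has complex characteristic roots, so a Sturm-type comparison with the oscillating solutions of the associated equation contradicts $u>0$. Thus $c^{*}$ is the minimal speed. For the asymptotics, with a wave in hand one linearises \eqref{eq:1.04} at $(0,0)$ for $\xi\to-\infty$ and, via $u=1-p$, $v=1-q$, at $(1,1)$ for $\xi\to+\infty$, and applies the exponential dichotomy / variation of parameters to the resulting triangular systems. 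At $-\infty$ the $u$-equation decouples at leading order, giving rate $\lambda_{1}$ (a double root, hence the $\xi$-factor, when $c=c^{*}$), and feeding $u$ into $v''-cv'-rv=-ra_{2}u$ gives $A_{2}=ra_{2}A_{1}/((1-a_{1})+r)>0$; this is \eqref{eq:1.06} and \eqref{eq:1.09}. At $+\infty$ the $q$-equation decouples with rate $\mu=\tfrac{c-\sqrt{c^{2}+4r(a_{2}-1)}}{2}$, and $p''-cp'-p=-a_{1}q$ is solved by variation of parameters: when $r(a_{2}-1)\le 1$ the forcing exponent dominates the homogeneous exponent $\nu=\tfrac{c-\sqrt{c^{2}+4}}{2}$, so $p\sim \bar A_{1}e^{\mu\xi}$ with $\bar A_{1}=a_{1}\bar A_{2}/(1-r(a_{2}-1))>0$, yielding \eqref{eq:1.07}/\eqref{eq:1.10}; when $r(a_{2}-1)>1$ the homogeneous mode $e^{\nu\xi}$ is the slower one, so $p$ carries a genuine $\hat A_{1}e^{\nu\xi}$ term (whose positivity follows since it is the dominant, sign-definite one while the forced $e^{\mu\xi}$ coefficient turns out negative) in addition to the forced $\hat A_{2}e^{\mu\xi}$ term, yielding \eqref{eq:1.08}/\eqref{eq:1.11}; the signs $A_{12c},A_{22c}<0$ come from requiring $(A_{11c}+A_{12c}\xi)e^{\sqrt{1-a_{1}}\,\xi}>0$ as $\xi\to-\infty$.

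For uniqueness, given two monotone waves $(u_{1},v_{1})$, $(u_{2},v_{2})$ of the same speed $c$, the sharp asymptotics above show that the shift $(u_{1}(\cdot+s),v_{1}(\cdot+s))$ lies componentwise above $(u_{2},v_{2})$ for $s$ large; one slides $s$ down to the infimum $s^{*}$ for which the ordering persists. If equality is attained at a finite point, the strong maximum principle for the cooperative system \eqref{eq:1.04} forces the two shifted waves to coincide; if the ordering degenerates only "at $\pm\infty$", the fact that both waves have the same exponential rate at each end lets one match leading coefficients and conclude the same, so the wave is unique up to translation. I expect the main obstacle to be the construction and verification of the sub-solution at the critical speed $c=c^{*}$: the double root forces a $(-\xi)e^{\sqrt{1-a_{1}}\,\xi}$ ansatz whose lower-order correction must be tuned so that \emph{both} coupled inequalities hold and $(\underline u,\underline v)\le(1,1)$, and it is precisely here that \textbf{[H3]} is needed; the secondary delicate point is the $r(a_{2}-1)>1$ case of the $+\infty$ asymptotics, where the $u$-component resonates with the slower homogeneous mode and one must argue the sign of $\hat A_{1}$.
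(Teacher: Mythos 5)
The central gap is in the existence step. The constant super-solution $(\bar u,\bar v)\equiv(1,1)$ cannot drive the monotone iteration of \cite{WuZou}: since $F(1,1)=0$, the iteration started from the constant $(1,1)$ is stationary (the first iterate is again $(1,1)$), so the scheme converges to the equilibrium $(1,1)$ and never produces a front. Relatedly, your claim that the limit's $-\infty$ state is $(0,0)$ ``by the sub-solution'' is backwards: a sub-solution only gives a lower bound, and your bump-type $\underline u=\max\{0,\,A_{1}e^{\lambda_{1}\xi}-B_{1}e^{\eta\xi}\}$ vanishes identically for large $\xi$ as well, so nothing prevents the iteration limit from tending to $(0,1)$ or $(1,1)$ as $\xi\rightarrow-\infty$. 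What is needed (and what the paper actually builds in Lemma \ref{lem:3}) is a \emph{nonconstant upper solution decaying to $0$ at $-\infty$ at the slow rate} $\frac{c-\sqrt{c^{2}-4(1-a_{1})}}{2}$: the paper takes a second K.P.P. wave $\bar{\bar u}$ solving \eqref{eq:2.09}, which overshoots to $\frac{1-a_{1}}{1-a_{1}-l}$, caps it at $1$, and pairs it with $(1+l)\bar{\bar u}$, while the lower solution is $(\underline u,\underline u)$ with $\underline u$ the K.P.P. wave of \eqref{eq:2.08}. Squeezing the wave between these two objects, which share the same decay rate at $-\infty$, is also what yields \eqref{eq:1.06} and \eqref{eq:1.09} with positive leading coefficients; with a constant super-solution that information is lost. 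You also consume \textbf{[H3]} in the wrong place: in the paper it is not needed for the sub-solution (that verification is straightforward) but guarantees that the parameter range $\frac{r(a_{2}-1)-(1-a_{1})}{1-a_{1}+r}\le l<1-a_{1}$ is nonempty, which is exactly what makes the second differential inequality for the upper solution hold.

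The remaining parts are broadly in line with the paper. The $+\infty$ asymptotics via linearization at $(1,1)$, the triangular structure and the exponential dichotomy, with the dichotomy of cases $r(a_{2}-1)\le1$ versus $r(a_{2}-1)>1$, is the paper's argument (carried out there on the derivative system \eqref{eq:2.24}--\eqref{eq:2.25}). The nonexistence for $0<c<2\sqrt{1-a_{1}}$ by oscillation is also the paper's idea, though note the inequality direction: from $1-a_{1}-u+a_{1}v\ge k$ and $u>0$ near $-\infty$ one gets $u''-cu'+ku\le 0$ (a positive supersolution of an oscillatory operator), not $\ge0$; the Sturm comparison still gives the contradiction. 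Your uniqueness sketch (slide to the infimal shift, then strong maximum principle) is the right completion of the sliding argument and is in fact more explicit than the paper's written version. But as it stands the existence proof, and with it the positivity of the leading coefficients at $-\infty$, does not go through until the constant super-solution is replaced by one vanishing at $-\infty$ with the correct rate.
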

\noindent \vspace{7pt}

In the next section we prove the theorem. The proof uses monotone
iteration of a pair of upper and lower solutions, which is different
from that of {[}FeiCarr{]}. In fact, we fully explore properties of
the wave solutions of the classical K.P.P (\cite{Sattinger}) equation
and the monotonic structure of system \eqref{eq:1.04}. For the existence
of the traveling wave solutions to the Lotka Volterra systems with
different assumptions on parameters, we refer to \cite{Hosono,Kanel,KanelZhou,Kan-on,TangFife}
and the references therein. Noting in the above mentioned results
little attention has been paid to the asymptotics of the wave solutions.
However, such information is the key to the understanding of the other
properties of the traveling wave solutions such as the strict monotonicity,
the uniqueness as well as the stability. As a final remark we point
out that the existence and stability of the traveling wave solutions
for \eqref{eq:1.02} is investigated in \cite{LeungHou} under conditions
H1 and H2 (with the inequality reversed).

\section{\label{sec:2}\textbf{The proof}}

\setcounter{equation}{0}

\noindent The proof of the Theorem is devided into several parts.

\subsection{\label{sub:2.1}The existence.}

We show the existence of the traveling wave solutions by monotone
iteration method given by \cite{WuZou}. Such method reduces the existence
of the wave solutions to the finding of an ordered pair of upper and
lower solutions. The construction of the upper and lower solutions
seems to be new, see also \cite{LeungHou}. 
\begin{defn}
\label{def:2}A continuous and essentially bounded function $(\bar{u}(\xi),\bar{v}(\xi))$,
$\xi\in\mathbb{R}$ is an upper solution of \eqref{eq:1.04} if it
satisfies \begin{equation}
\left\{ \begin{array}{l}
u''-cu'+u(1-u-a_{1}+a_{1}v)\leq0,\\
\\v''-cv'+r(1-v)(a_{1}u-v)\leq0,\end{array}\right.\mbox{ for }\xi\in\mathbb{R}/\left\{ y_{1,}y_{2},...y_{m}\right\} \label{eq:2.01}\end{equation}
and the boundary conditions \begin{equation}
\left(\begin{array}{c}
u\\
v\end{array}\right)(-\infty)\geq\left(\begin{array}{c}
0\\
0\end{array}\right),\,\;\left(\begin{array}{c}
u\\
v\end{array}\right)(+\infty)\geq\left(\begin{array}{c}
1\\
1\end{array}\right).\label{eq:2.02}\end{equation}
while at $y_{i}$, $i=1,2..m$, $m\in\mathbb{N}$, 

\begin{equation}
(\bar{u}'(y_{i}-0),\bar{v}'(y_{i}-0))^{T}\geq(\bar{u}'(y_{i}+0),\bar{v}'(y_{i}+0))^{T}.\label{eq:2.02-1}\end{equation}

A lower solution of \eqref{eq:2.01} is defined similarly by reversing
the above inequalities in \eqref{eq:2.01}, \eqref{eq:2.02} and \eqref{eq:2.02-1}. 
\end{defn}
First recall the following classical result (\cite{Sattinger}) on
the traveling wave solutions of K.P.P equation \begin{equation}
\left\{ \begin{array}{l}
w''-cw'+f(w)=0,\\
\\w(-\infty)=0,\quad w(+\infty)=b.\end{array}\right.\label{eq:2.03}\end{equation}
where $f\in C^{2}([0,\, b])$ and $f>0$ on the open interval $(0,b)$
with $f(0)=f(b)=0$, $f'(0)=d_{1}>0$ and $f'(b)=-d_{2}<0$. The Lemma
below describes the properties of the wave solurtions of \eqref{eq:2.03}.
\begin{lem}
\label{lem:2}Corresponding to every fixed wave speed $c\geq2\sqrt{d_{1}}$,
system \eqref{eq:2.03} has a unique (up to a translation of the origin)
strictly monotonically increasing traveling wave solution $w(\xi)$
for $\xi\in\mathbb{R}$. The traveling wave solution $w$ has the
following asymptotic behaviors:

For the wave solution with non-critical speed $c>2\sqrt{d_{1}}$,
we have \begin{equation}
w(\xi)=a_{w}e^{\frac{c-\sqrt{c^{2}-4d_{1}}}{2}\xi}+o(e^{\frac{c-\sqrt{c^{2}-4d_{1}}}{2}\xi})\mbox{ as }\xi\rightarrow-\infty,\label{eq:2.04}\end{equation}
 \begin{equation}
w(\xi)=b-b_{w}e^{\frac{c-\sqrt{c^{2}+4d_{2}}}{2}\xi}+o(e^{\frac{c-\sqrt{c^{2}+4d_{2}}}{2}\xi})\mbox{ as }\xi\rightarrow+\infty,\label{eq:2.05}\end{equation}
 where $a_{w}$ and $b_{w}$ are positive constants.

For the wave with critical speed $c=2\sqrt{d_{1}}$, we have\begin{equation}
w(\xi)=(a_{c}+d_{c}\xi)e^{\sqrt{d_{1}}\xi}+o(\xi e^{\sqrt{d_{1}}\xi})\mbox{ as }\xi\rightarrow-\infty,\label{eq:2.06}\end{equation}
\begin{equation}
w(\xi)=b-b_{c}e^{(\sqrt{d_{1}}-\sqrt{d_{1}+d_{2}})\xi}+o(e^{(\sqrt{d_{1}}-\sqrt{d_{1}+d_{2}})\xi})\mbox{ as }\xi\rightarrow+\infty,\label{eq:2.07}\end{equation}
 where the constants $d_{c}$ is negative, $b_{c}$ is positive and
$a_{c}\in\mathbb{R}$.

\medskip{}

\end{lem}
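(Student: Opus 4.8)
The plan is to treat this as a classical phase–plane and ODE–asymptotics problem, since Lemma~\ref{lem:2} is a self–contained statement about the scalar K.P.P.\ equation rather than about the Lotka–Volterra system itself. For the existence and uniqueness of the monotone front for every $c\geq 2\sqrt{d_{1}}$, I would not reprove the classical theory from scratch; the cleanest route is to cite \cite{Sattinger} (and the standard Kolmogorov–Petrovsky–Piskunov framework) and only sketch the argument: write \eqref{eq:2.03} as a first–order system $w'=z$, $z'=cz-f(w)$ in the $(w,z)$ plane, linearize at the equilibria $(0,0)$ and $(b,0)$, and observe that at $(0,0)$ the eigenvalues are $\lambda_{\pm}=\tfrac{c\pm\sqrt{c^{2}-4d_{1}}}{2}$, both real and positive precisely when $c\geq 2\sqrt{d_{1}}$, so $(0,0)$ is an unstable node, while at $(b,0)$ the eigenvalues $\mu_{\pm}=\tfrac{c\pm\sqrt{c^{2}+4d_{2}}}{2}$ are real of opposite sign, so $(b,0)$ is a saddle. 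A connecting orbit from the node to the saddle along the one–dimensional stable manifold at $(b,0)$ exists for all $c\geq 2\sqrt{d_{1}}$ by the usual shooting/monotonicity argument using $f>0$ on $(0,b)$; uniqueness up to translation follows because the unstable manifold at the node is (generically, i.e.\ along the slow eigendirection) one–dimensional in the relevant sense and trajectories in the strip $0<w<b$, $z>0$ cannot cross. Monotonicity ($z=w'>0$ throughout) is immediate from the fact that the orbit stays in the region $z>0$, which one checks by noting $z$ cannot vanish in the interior: at a putative interior zero $z'=-f(w)<0$, contradicting that the orbit leaves the node with $z>0$ and must re–enter a neighborhood of $(b,0)$.

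The substantive part — and the part genuinely needed later in the paper — is the asymptotic expansions \eqref{eq:2.04}–\eqref{eq:2.07}, which I would obtain from the linearization plus a stable/unstable manifold (or exponential–dichotomy) argument. Near $\xi\to-\infty$ the orbit approaches $(0,0)$ along the unstable manifold of the node. For the non–critical speed $c>2\sqrt{d_{1}}$ the two eigenvalues $\lambda_{-}<\lambda_{+}$ at $(0,0)$ are distinct and positive; a generic orbit entering the node does so tangent to the slow eigendirection $\lambda_{-}=\tfrac{c-\sqrt{c^{2}-4d_{1}}}{2}$, giving $w(\xi)=a_{w}e^{\lambda_{-}\xi}(1+o(1))$, with $a_{w}>0$ because $w>0$. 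One should note that the exceptional orbit tangent to $\lambda_{+}$ is the degenerate one; the KPP front is the generic one, which can be justified by comparison with the known behavior of the minimal–speed front or by a direct argument that the $\lambda_{+}$–orbit fails to connect to the saddle. For $\xi\to+\infty$ the orbit approaches the saddle $(b,0)$ along its one–dimensional stable manifold, whose eigenvalue is the negative root $\mu_{-}=\tfrac{c-\sqrt{c^{2}+4d_{2}}}{2}<0$; writing $w=b-W$ with $W>0$ small, $W''-cW'-d_{2}W+O(W^{2})=0$, so $W(\xi)=b_{w}e^{\mu_{-}\xi}(1+o(1))$ with $b_{w}>0$. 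This gives \eqref{eq:2.05}. The $o(\cdot)$ error terms are controlled by absorbing the quadratic remainder $f(w)-d_{1}w=O(w^{2})$ (resp.\ $O(W^{2})$) into the linear equation and invoking the exponential dichotomy / variation–of–constants estimate on a half–line, exactly the technique the paper advertises.

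For the critical speed $c=2\sqrt{d_{1}}$ the eigenvalue $\lambda_{-}=\lambda_{+}=\sqrt{d_{1}}$ is a double root at $(0,0)$, so the linear part has a Jordan block and the decay picks up a polynomial factor: the general small solution of $W''-2\sqrt{d_{1}}W'+d_{1}W=0$ is $(a_{c}+d_{c}\xi)e^{\sqrt{d_{1}}\xi}$. One has to show $d_{c}<0$; this is the one sign that is not automatic, and I would get it from the positivity and monotonicity of $w$ together with the fact that a pure exponential $a_{c}e^{\sqrt{d_{1}}\xi}$ (the $d_{c}=0$ case) corresponds to a solution that, when continued, does not reach the saddle — a standard feature of the critical KPP front (if $d_c>0$ one can compare with a supersolution to get a contradiction, and $d_c=0$ is the exceptional non–connecting orbit). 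The $+\infty$ behavior \eqref{eq:2.07} is unchanged in form from \eqref{eq:2.05}: at the saddle nothing degenerates as $c\to 2\sqrt{d_{1}}$, one just substitutes $c=2\sqrt{d_{1}}$ into $\mu_{-}$, yielding the exponent $\sqrt{d_{1}}-\sqrt{d_{1}+d_{2}}$.

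I expect the main obstacle to be making the ``generic vs.\ exceptional orbit'' distinction rigorous — i.e.\ proving that the KPP front decays at $-\infty$ with the \emph{slow} rate $\lambda_{-}$ (and with the genuine $\xi e^{\sqrt{d_1}\xi}$ term, $d_c\neq 0$, at critical speed) rather than the fast rate — since the fast/exceptional solution also satisfies the linear equation and one must rule it out using global information about the connecting orbit. Everything else reduces to linearization at hyperbolic (or, at critical speed, non–hyperbolic with a Jordan block) equilibria plus the exponential–dichotomy estimate for the nonlinear remainder, which is routine. Since Lemma~\ref{lem:2} is classical, the honest write–up would cite \cite{Sattinger} for existence/uniqueness/monotonicity and give the asymptotics in detail, as those are what the rest of the paper leans on.
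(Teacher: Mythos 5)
Your proposal is consistent with the paper: the paper gives no proof of Lemma~\ref{lem:2} at all --- it is invoked verbatim as a classical result with the citation \cite{Sattinger} --- so ``cite the classical literature, sketch the phase--plane argument'' is exactly the level at which the paper treats it. Your sketch is the standard proof and is correct in outline: the node/saddle classification of $(0,0)$ and $(b,0)$, the connection along the one--dimensional stable manifold of the saddle (which is also what gives uniqueness up to translation --- note it is this manifold, not the two--dimensional unstable set of the node, that pins the orbit down), and linearization plus a dichotomy/variation--of--constants estimate for the error terms in \eqref{eq:2.04}--\eqref{eq:2.07}. The soft spot you flag yourself is the genuine one: showing that the front decays at $-\infty$ with the slow rate $\tfrac{c-\sqrt{c^{2}-4d_{1}}}{2}$ rather than the fast rate when $c>2\sqrt{d_{1}}$, and that $d_{c}\neq0$ (indeed $d_{c}<0$) at the critical speed; this is precisely the content supplied by the classical KPP references, so leaning on \cite{Sattinger} there is legitimate. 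One further caveat worth recording: under the hypotheses as literally stated in \eqref{eq:2.03} ($f>0$ on $(0,b)$ only), the minimal speed need not equal $2\sqrt{d_{1}}$ and the front need not have the slow decay, so both your shooting claim ``exists for all $c\geq2\sqrt{d_{1}}$'' and the asymptotics \eqref{eq:2.04}, \eqref{eq:2.06} implicitly use a KPP--type subtangency condition such as $f(w)\leq d_{1}w$; this is harmless for the paper since the nonlinearities actually fed into Lemma~\ref{lem:2} in \eqref{eq:2.08}--\eqref{eq:2.09} are logistic and satisfy it, but it should be said explicitly in an honest write--up.
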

According to Lemma \ref{lem:2}, we let $c\geq2\sqrt{1-a_{1}}$ be
fixed and $\underline{u}(\xi)$, $\xi\in\mathbb{R}$ be a solution
of the following form of K.P.P equation

\begin{equation}
\left\{ \begin{array}{l}
g''(\xi)-g'(\xi)+(1-a_{1})g(\xi)(1-g(\xi))=0,\\
\\g(-\infty)=0,\, g(+\infty)=1,\end{array}\quad\quad\xi\in\mathbb{R}\right.\label{eq:2.08}\end{equation}
and for the same $c$ let $l$ be a number such that $\frac{r(a_{2}-1)-(1-a_{1})}{1-a_{1}+r}\leq l<1-a_{1}$
and $\bar{\bar{u}}(\xi)$, $\xi\in\mathbb{R}$ be a solution of a
K.P.P equation with the form \begin{equation}
\left\{ \begin{array}{l}
h''(\xi)-ch'(\xi)+(1-a_{1})h(\xi)(1-\frac{1-a_{1}-l}{1-a_{1}}h(\xi))=0,\\
\\h(-\infty)=0,\quad h(+\infty)=\frac{1-a_{1}}{1-a_{1}-l},\end{array}\quad\quad\xi\in\mathbb{R}.\right.\label{eq:2.09}\end{equation}

Define

\begin{equation}
\left(\begin{array}{l}
\bar{u}(\xi)\\
\\\bar{v}(\xi)\end{array}\right)=\left(\begin{array}{l}
\min_{\xi\in\mathbb{R}}\{\bar{\bar{u}}(\xi),1\}\\
\\\min_{\xi\in\mathbb{R}}\{(1+l)\bar{\bar{u}}(\xi),1\}\end{array}\right),\quad\left(\begin{array}{l}
\underline{u}(\xi)\\
\\\underline{v}(\xi)\end{array}\right)=\left(\begin{array}{l}
\underline{u}(\xi)\\
\\\underline{u}(\xi)\end{array}\right).\label{eq:2.10}\end{equation}

We have the following
\begin{lem}
\label{lem:3}For every fixed $c\geq2\sqrt{1-a_{1}}$, $(\bar{u}(\xi),\bar{v}(\xi))^{T}$
and $(u(\xi),\underbar{v}(\xi))^{T}$ in \eqref{eq:2.10} define respectively
an upper and lower solutions for \eqref{eq:1.04}-\eqref{eq:1.05}.\end{lem}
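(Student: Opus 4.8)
The plan is to verify every condition in Definition~\ref{def:2} directly, after cutting $\mathbb{R}$ into the intervals on which each ``$\min$'' in \eqref{eq:2.10} is attained by one branch or the other. By Lemma~\ref{lem:2} the profile $\bar{\bar u}$ is smooth, strictly increasing, and runs from $0$ to $\tfrac{1-a_{1}}{1-a_{1}-l}>1$, so there is a unique $\xi_{1}$ with $\bar{\bar u}(\xi_{1})=1$ and a unique $\xi_{2}\le\xi_{1}$ with $(1+l)\bar{\bar u}(\xi_{2})=1$ (using $\tfrac1{1+l}\le1$ and, for $l>0$, $\tfrac1{1+l}<\tfrac{1-a_{1}}{1-a_{1}-l}$, equivalent to $a_{1}<1$). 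Thus $\mathbb{R}=(-\infty,\xi_{2})\cup(\xi_{2},\xi_{1})\cup(\xi_{1},\infty)$, with $\bar u$ and $\bar v$ smooth on each piece and corners only at $\xi_{2}$ and $\xi_{1}$. On this decomposition the boundary conditions \eqref{eq:2.02} are immediate, since $\bar u,\bar v\to0$ as $\xi\to-\infty$ and $\bar u\equiv\bar v\equiv1$ for $\xi\ge\xi_{1}$, and \eqref{eq:2.02-1} holds at $\xi_{1}$ and $\xi_{2}$ because the left derivative there is the positive derivative of an increasing K.P.P.\ profile while the right derivative is $0$. For the lower pair $(\underline u,\underline u)^{T}$ there are no corners and $\underline u$ increases monotonically from $0$ to $1$, so the boundary and corner requirements (reversed) are trivial.

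For the lower solution the two differential inequalities reduce to facts already in hand. Since the second component equals $\underline u$, the first reaction term collapses: $\underline u(1-a_{1}-\underline u+a_{1}\underline u)=(1-a_{1})\underline u(1-\underline u)$, so the left side of the (reversed) first inequality in \eqref{eq:2.01} is exactly the left side of \eqref{eq:2.08} and therefore vanishes. For the second inequality, $r(1-\underline u)(a_{2}\underline u-\underline u)=r(a_{2}-1)\underline u(1-\underline u)$ and \eqref{eq:2.08} gives $\underline u''-c\underline u'=-(1-a_{1})\underline u(1-\underline u)$, so the left side equals $\bigl(r(a_{2}-1)-(1-a_{1})\bigr)\underline u(1-\underline u)\ge0$ by \textbf{[H2]} and $0<\underline u<1$.

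For the upper solution the two flat pieces are easy: on $(\xi_{1},\infty)$ both components are $\equiv1$, so both reaction terms and both second-order parts vanish; on $(\xi_{2},\xi_{1})$ one has $\bar v\equiv1$, which makes the second inequality trivial, while substituting \eqref{eq:2.09} for $\bar{\bar u}''-c\bar{\bar u}'$ reduces the left side of the first inequality to $\bar{\bar u}\bigl(a_{1}-(a_{1}+l)\bar{\bar u}\bigr)$, which is $\le0$ since $\bar{\bar u}>\tfrac1{1+l}\ge\tfrac{a_{1}}{a_{1}+l}$ there. The substance is the interval $(-\infty,\xi_{2})$, where $\bar u=\bar{\bar u}$, $\bar v=(1+l)\bar{\bar u}$, and $w:=\bar{\bar u}$ ranges over $(0,\tfrac1{1+l})$. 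Using \eqref{eq:2.09} to eliminate $\bar{\bar u}''-c\bar{\bar u}'$, the first inequality becomes $-l(1-a_{1})w^{2}\le0$, which is clear, and the second becomes $w(\alpha+\beta w)\le0$ with $\alpha=\bigl(r(a_{2}-1)-(1-a_{1})\bigr)-l\bigl((1-a_{1})+r\bigr)$ and $\beta=(1+l)\bigl((1-a_{1}-l)-r(a_{2}-1-l)\bigr)$. Since $\alpha+\beta w$ is affine in $w$, it suffices to check $\le0$ at the two ends: $w=0$ gives $\alpha\le0$, which is exactly the lower bound $l\ge\tfrac{r(a_{2}-1)-(1-a_{1})}{(1-a_{1})+r}$ built into the choice of $l$, and $w=\tfrac1{1+l}$ gives $\alpha+\tfrac{\beta}{1+l}=-l(2-a_{1})\le0$.

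The single genuine difficulty, and the place where all three hypotheses are spent, is ensuring that the admissible range $\tfrac{r(a_{2}-1)-(1-a_{1})}{1-a_{1}+r}\le l<1-a_{1}$ is nonempty so that the construction makes sense; this holds if and only if $r(a_{2}-1)-(1-a_{1})<(1-a_{1})(1-a_{1}+r)$, i.e.\ if and only if $r(a_{2}-1)<(1-a_{1})(2-a_{1}+r)$, which is precisely \textbf{[H3]}. Condition \textbf{[H2]} makes the left endpoint nonnegative so that $l\ge0$ (used repeatedly above and to get $\xi_{2}\le\xi_{1}$), and \textbf{[H1]} provides $0<a_{1}<1<a_{2}$, the sign facts behind the elementary comparisons $\tfrac1{1+l}\le1$, $\tfrac1{1+l}\ge\tfrac{a_{1}}{a_{1}+l}$, and $\tfrac1{1+l}<\tfrac{1-a_{1}}{1-a_{1}-l}$. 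Once these algebraic facts are assembled, Lemma~\ref{lem:3} follows by collecting the cases above.
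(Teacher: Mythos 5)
Your proof is correct and follows essentially the same route as the paper: the same three-interval decomposition of the upper solution at the points where $\bar{\bar u}=\tfrac1{1+l}$ and $\bar{\bar u}=1$, the same substitution of the K.P.P.\ equations \eqref{eq:2.08}--\eqref{eq:2.09} to reduce the differential inequalities to algebraic ones in $\bar{\bar u}$, yielding the same quantities $-l(1-a_{1})\bar{\bar u}^{2}$ and the affine bracket whose sign is governed by the choice of $l$. You in fact supply details the paper leaves implicit (the explicit lower-solution check via \textbf{[H2]}, the corner condition \eqref{eq:2.02-1}, the endpoint evaluation $\alpha+\beta/(1+l)=-l(2-a_{1})\le0$, and the equivalence of \textbf{[H3]} with the nonemptiness of the range of $l$), so no gap remains.
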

\begin{proof}
The verification for $(\underline{u},\underline{v}$) being a lower
solution for \eqref{eq:1.04}-\eqref{eq:1.05} is straightforward,
so we skip it. According to Lemma \ref{lem:2} $\bar{\bar{u}}(\xi)$
is strictly monotonically increasing for $\xi\in\mathbb{R}$, there
exist $N_{1},N_{2}\in\mathbb{R}$ such that $\bar{\bar{u}}(N_{1})=\frac{1}{1+l}$
and $\bar{\bar{u}}(N_{2})=1$. We can therefore rewrite $(\bar{u},\bar{v})$
as follows

\begin{equation}
\left(\begin{array}{l}
\bar{u}(\xi)\\
\\\bar{v}(\xi)\end{array}\right)=\left\{ \begin{array}{l}
(\bar{\bar{u}}(\xi),\bar{\bar{v}}(\xi))^{T},\quad\mbox{for }-\infty<\xi\leq N_{1};\\
\\(\bar{\bar{u}}(\xi),1)^{T},\quad\mbox{for }N_{1}<\xi\leq N_{2};\\
\\(1,1)^{T},\quad\mbox{for }N_{2}<\xi<+\infty.\end{array}\right.\label{eq:2.11-1}\end{equation}

For $N_{2}<\xi<+\infty$, $(\bar{u}(\xi),\bar{v}(\xi))^{T}=(1,1)^{T}$
is obviously a solution of \eqref{eq:1.04} and satisfying the inequality
\eqref{eq:2.02-1} on the boundary, we only need to verify that $(\bar{u}(\xi),\bar{v}(\xi))^{T}$
satisfies the inequality \eqref{eq:2.02-1} on the intervals $(-\infty,N_{1}]$
and $(N_{1},N_{2})$ respectively. For the pair $(\bar{u}(\xi),\bar{v}(\xi))^{T}=(\bar{\bar{u}}(\xi),(1+l)\bar{\bar{u}}(\xi))^{T}$
on $-\infty<\xi\leq N_{1}$, we have

\[
\begin{array}{ll}
 & \bar{\bar{u}}''-c\bar{\bar{u}}'+\bar{\bar{u}}(1-a_{1}-\bar{\bar{u}}+a_{1}\bar{\bar{v}})\\
\\= & -(1-a_{1})\bar{\bar{u}}(1-\frac{1-a_{1}-l}{1-a_{1}}\bar{\bar{u}})+\bar{\bar{u}}(1-a_{1}-\bar{\bar{u}}+a_{1}(1+l)\bar{\bar{u}})\\
\\= & -(1-a_{1})\bar{\bar{u}}(1-\frac{1-a_{1}-l}{1-a_{1}}\bar{\bar{u}}-1+\frac{1-a_{1}(1+l)}{1-a_{1}}\bar{\bar{u}})\\
\\= & -(1-a_{1})l\bar{\bar{u}}^{2}\leq0,\end{array}\]
and\[
\begin{array}{ll}
 & \bar{\bar{v}}''-c\bar{\bar{v}}'+r(1-\bar{\bar{v}})(a_{2}\bar{\bar{u}}-\bar{\bar{v}})\\
\\= & (1+l)(\bar{\bar{u}}''-c\bar{\bar{u}}'+\frac{r}{1+l}(1-(1+l)\bar{\bar{u}})(a_{2}\bar{\bar{u}}-(1+l)\bar{\bar{u}}))\\
\\= & (1+l)[\bar{\bar{u}}''-c\bar{\bar{u}}'+(1-a_{1})\bar{\bar{u}}(1-\bar{\bar{u}}+\frac{l}{1-a_{1}}\bar{\bar{u}})\\
\\ & -(1-a_{1})\bar{\bar{u}}(1-\bar{\bar{u}}+\frac{l}{1-a_{1}}\bar{\bar{u}})+\frac{r}{1+l}(1-(1+l)\bar{\bar{u}})\bar{\bar{u}}(a_{2}-(1+l))]\\
\\= & (1+l)\bar{\bar{u}}[\frac{r}{1+l}(1-(1+l)\bar{\bar{u}})(a_{2}-(1+l))-(1-a_{1})(1-\bar{\bar{u}}+\frac{l}{1-a_{1}}\bar{\bar{u}})]\\
\\= & (1+l)\bar{\bar{u}}[r(\frac{a_{2}}{1+l}-1)-(1-a_{1})-\bar{\bar{u}}(r(a_{2}-1-l)-(1-a_{1}-l)]\\
\\\leq & 0.\end{array}\]
The last inequality is true because of {[}\textbf{H3}{]} and the choice
of $l$.

For $N_{1}<\xi\leq N_{2}$, we verify that $(\bar{\bar{u}},1)^{T}$
satisfies the inequality \eqref{eq:2.01}. We only verify for the
first component since the one for $v=1$ is trivial.

\[
\begin{array}{ll}
 & \bar{\bar{u}}''-c\bar{\bar{u}}'+\bar{\bar{u}}(1-a_{1}-\bar{\bar{u}}+a_{1}v)\\
\\= & \bar{\bar{u}}''-c\bar{\bar{u}}'+\bar{\bar{u}}(1-\bar{\bar{u}})\\
\\ & +(1-a_{1})\bar{\bar{u}}(1-\frac{1-a_{1}-1}{1-a_{1}}\bar{\bar{u}})-(1-a_{1})\bar{\bar{u}}(1-\frac{1-a_{1}-1}{1-a_{1}}\bar{\bar{u}})\\
\\= & \bar{\bar{u}}(1-\bar{\bar{u}}-(1-a_{1})+(1-a_{1}-l)\bar{\bar{u}})\\
\\\leq & 0\end{array}\]

Therefore, we have the conclusion of the Lemma.
\end{proof}
To show the orderliness of the upper and lower solution pairs, we
first introduce a sliding domain method which applies to a sightly
more general system than \eqref{eq:1.02}. Noting that no monotonicity
requirements are imposed on the upper and lower sloutions. 
\begin{lem}
\label{lem:4}Let the $C^{2}$ vector functions $\bar{U}(\xi)=(\bar{u}_{1}(\xi),\bar{u}_{2}(\xi),...,\bar{u}_{n}(\xi))^{T}$
and $\underline{U}(\xi)=(\underline{u}_{1}(\xi),\underline{u}_{2}(\xi),...,\underline{u}_{n}(\xi))^{T}$
be $C^{2}$ and satisfy the following inequalities

\begin{equation}
D\bar{U}''-c\bar{U}'+F(U)\leq0\leq D\underline{U}''-c\underline{U}'+F(\underline{U})\quad\mbox{for\,}\,\xi\in[-N,N]\label{eq:2.11}\end{equation}
and

\begin{equation}
\underline{U}(-N)<\bar{U}(\xi)\quad\mbox{for\,}\,\xi\in(-N,N],\label{eq:2.12}\end{equation}

\begin{equation}
\underline{U}(\xi)<\bar{U}(N)\quad\mbox{for\,}\,\xi\in[-N,N),\label{eq:2.13}\end{equation}
where $D$ is a diagonal matrix with positive entries $D_{i}$, $i=1,2...n$,
$F(U)=(F_{1}(U),...,F_{n}(U))^{T}$ is $C^{1}$ with respect to its
components and $\frac{\partial F_{i}}{\partial u_{j}}\geq0$ for $i\neq j$,
$i,j=1,2...n$, then 

\begin{equation}
\underline{U}(\xi)\leq\bar{U}(\xi),\qquad\xi\in[-N,N].\label{eq:2.14}\end{equation}
\end{lem}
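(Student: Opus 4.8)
The plan is to prove the comparison inequality \eqref{eq:2.14} by a sliding-domain argument. Set $W(\xi) = \bar U(\xi) - \underline U(\xi)$ on $[-N,N]$. The hypotheses \eqref{eq:2.12}--\eqref{eq:2.13} say that $W>0$ on the two ``corner'' configurations: $\bar U(\xi) > \underline U(-N)$ for $\xi\in(-N,N]$ and $\bar U(N) > \underline U(\xi)$ for $\xi\in[-N,N)$. The idea is to translate one of the profiles and slide it back: for $\tau\ge 0$ define $\underline U^{\tau}(\xi) = \underline U(\xi-\tau)$ on the shifted interval, compare $\bar U$ with $\underline U^{\tau}$ for $\tau$ large (where they have disjoint support in the relevant sense and the inequality is forced by the boundary data), and then decrease $\tau$ to $0$, showing the inequality $\bar U \ge \underline U^{\tau}$ is preserved. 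The cleanest packaging, which I would use, is a direct strong-maximum-principle / Nagumo-type argument rather than literal sliding: suppose for contradiction that $W_i(\xi_0)<0$ for some component $i$ and some $\xi_0\in(-N,N)$ (the endpoints are excluded by \eqref{eq:2.12}--\eqref{eq:2.13}), pick the most negative component at its most negative point, and derive a contradiction from the differential inequalities.

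Here are the steps in order. First, from \eqref{eq:2.11} subtract the two inequalities to get, for each $i$,
\begin{equation}
D_i W_i'' - c W_i' + F_i(\bar U) - F_i(\underline U) \le 0 \quad \text{on } [-N,N].
\end{equation}
Second, write $F_i(\bar U)-F_i(\underline U) = \sum_j b_{ij}(\xi)\, W_j(\xi)$ where $b_{ij}(\xi) = \int_0^1 \frac{\partial F_i}{\partial u_j}(\underline U + s W)\,ds$; the cooperativity hypothesis $\frac{\partial F_i}{\partial u_j}\ge 0$ for $i\ne j$ gives $b_{ij}(\xi)\ge 0$ for $i\ne j$, and all $b_{ij}$ are bounded on the compact interval since $F\in C^1$ and $\bar U,\underline U\in C^2$. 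Third, suppose $\min_i \inf_{[-N,N]} W_i < 0$; by \eqref{eq:2.12}--\eqref{eq:2.13} (which force $W>0$ at $\pm N$ and more) this minimum is attained at an interior point $\xi_0$ by some component $W_k$, with $W_k(\xi_0)<0$, $W_k(\xi_0)\le W_j(\xi_0)$ for all $j$, $W_k'(\xi_0)=0$, $W_k''(\xi_0)\ge 0$. Fourth, evaluate the $k$-th inequality at $\xi_0$:
\begin{equation}
0 \ge D_k W_k''(\xi_0) - c W_k'(\xi_0) + b_{kk}(\xi_0) W_k(\xi_0) + \sum_{j\ne k} b_{kj}(\xi_0) W_j(\xi_0),
\end{equation}
so $-b_{kk}(\xi_0)W_k(\xi_0) \ge \sum_{j\ne k} b_{kj}(\xi_0)W_j(\xi_0) \ge W_k(\xi_0)\sum_{j\ne k} b_{kj}(\xi_0)$, using $W_j(\xi_0)\ge W_k(\xi_0)$ and $b_{kj}\ge 0$. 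Dividing by $W_k(\xi_0)<0$ reverses the inequality and gives $-b_{kk}(\xi_0) \le \sum_{j\ne k} b_{kj}(\xi_0)$, i.e. $\sum_j b_{kj}(\xi_0)\ge 0$ — not yet a contradiction, so one needs to handle the diagonal term, which is exactly the obstacle.

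The standard fix, and the one I would carry out, is to absorb the zeroth-order terms by a change of variables: set $\tilde W_i(\xi) = e^{-\lambda\xi} W_i(\xi)$ (or, more symmetrically, $W_i(\xi) = e^{\mu\xi}Z_i(\xi)$) with $\lambda$ chosen large enough that the resulting diagonal coefficient becomes strictly negative uniformly on $[-N,N]$ — possible because $c$, the $b_{ij}$, and $D_i$ are all bounded on the compact interval. Under this substitution the differential inequalities retain the same cooperative structure (off-diagonal signs unchanged, since multiplication by a positive scalar preserves them) but now the diagonal coefficient $\hat b_{kk}$ satisfies $\hat b_{kk}<0$, so at the interior negative minimum $\xi_0$ of $\tilde W_k$ the chain of inequalities above yields $0 > \hat b_{kk}(\xi_0) \ge \sum_{j\ne k}\hat b_{kj}(\xi_0) \ge 0$ after the division, a genuine contradiction. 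Hence $\tilde W\ge 0$ on $[-N,N]$, so $W\ge 0$, which is \eqref{eq:2.14}. The one delicate point deserving care is the case where the infimum of some $\tilde W_k$ is attained only at an endpoint or is negative ``at infinity'' inside the interval — but the interval is compact and the endpoint values are controlled: \eqref{eq:2.12} with $\xi = N$ gives $\bar U(N) > \underline U(-N)$ and \eqref{eq:2.13} with $\xi = -N$ gives the same, while \eqref{eq:2.12}, \eqref{eq:2.13} directly rule out the minimum sitting at $\xi = N$ or $\xi = -N$ with a negative value, so the minimizer is genuinely interior and the Hopf-type endpoint subtlety does not arise. This also explains why the hypotheses are phrased with strict inequality on the ``crossed'' configurations rather than merely $\underline U(\pm N)\le \bar U(\pm N)$.
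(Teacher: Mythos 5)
Your reduction to the negative--minimum argument is fine up to the point you yourself flag, but the ``standard fix'' you invoke does not exist, and this is a fatal gap. If you set $W_i(\xi)=e^{\lambda\xi}Z_i(\xi)$, the inequality $D_iW_i''-cW_i'+\sum_j b_{ij}W_j\le 0$ becomes
\begin{equation*}
D_iZ_i''+(2\lambda D_i-c)Z_i'+\bigl(D_i\lambda^{2}-c\lambda+b_{ii}\bigr)Z_i+\sum_{j\ne i}b_{ij}Z_j\le 0 ,
\end{equation*}
so the new diagonal coefficient is $D_i\lambda^{2}-c\lambda+b_{ii}$, which tends to $+\infty$ as $|\lambda|\to\infty$; it can be made negative only if $b_{ii}<c^{2}/(4D_i)$, and $b_{ii}$ is dictated by $F$ and the given profiles, not at your disposal. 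No choice of $\lambda$ ``large enough'' helps. The failure is not cosmetic: your argument uses the hypotheses \eqref{eq:2.12}--\eqref{eq:2.13} only through the endpoint ordering $W(\pm N)\ge 0$ (in fact the hypotheses give only non-strict ordering at $\pm N$ in the limit), and under that weaker information the conclusion is simply false. Scalar counterexample ($n=1$, $D=1$, $c=0$): take $F(u)=\omega^{2}u$ with $\omega=2\pi/(3N)$, $\bar u\equiv 0$, $\underline u(\xi)=\cos(\omega\xi)$. Both differential inequalities hold (with equality), $\bar u-\underline u=\tfrac12>0$ at $\xi=\pm N$, yet $\bar u-\underline u=-1<0$ at $\xi=0$. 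So no maximum-principle manipulation that sees only the endpoint values can close the argument; the full ``crossed'' conditions \eqref{eq:2.12}--\eqref{eq:2.13} (note that \eqref{eq:2.13} fails in the counterexample) must enter in an essential way.

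That is exactly what the paper's sliding proof does, and it is the route you sketched in your first sentence but then abandoned. One translates the upper solution, $\bar U^{\mu}(\xi)=\bar U(\xi+\mu)$, $0\le\mu\le 2N$; conditions \eqref{eq:2.12}--\eqref{eq:2.13} guarantee strict ordering at the two ends of the overlap interval for \emph{every} shift $\mu$, because those ends are compared against $\underline U(-N)$ and $\bar U(N)$ respectively. Starting from $\mu=2N$ and decreasing, at a first touching value of $\mu$ the difference $W=\bar U^{\mu}-\underline U$ is nonnegative with an interior zero in some component, and there the strong maximum principle applies regardless of the sign of $\partial F_i/\partial u_i$: since $W\ge 0$ and the off-diagonal terms are nonnegative, one may discard them (and the positive part of the diagonal term) to get $D_iw_i''-cw_i'+q(\xi)w_i\le 0$ with $q\le 0$, forcing $w_i\equiv 0$ and contradicting the strict inequality at the interval ends. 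This is why the nonnegativity of $W$ obtained from sliding, not a sign adjustment of the diagonal coefficient, is the mechanism that neutralizes the zeroth-order term; your direct approach cannot be repaired without reinstating that step.
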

\begin{proof}
We adapt the proof of \cite{Berestycki} . Shift $\bar{U}(\xi)$ to
the left, for $0\leq\mu\leq2N$, consider $\bar{U}^{\mu}(\xi):=\bar{U}(\xi+\mu)$
on the interval $(-N-\mu,N-\mu)$. On both ends of the interval, by
\eqref{eq:2.12} and \eqref{eq:2.13}, we have

\begin{equation}
\underline{U}(\xi)<\bar{U}^{\mu}(\xi).\label{eq:2.15}\end{equation}
Starting from $\mu=2N$, decreasing $\mu$, for every $\mu$ in $0<\mu<2N$,
the inequality \eqref{eq:2.15} is true on the end points of the respective
interval. On decreasing $\mu$, suppose that there is a first $\mu$
with $0<\mu<2N$ such that

\[
\underline{U}(\xi)\leq\bar{U}^{\mu}(\xi)\quad\xi\in(-N-\mu,N-\mu)\]
and there is one component, for example the $i-th$, such that the
equality holds on a point $\xi_{1}$ inside the interval. Let $W(\xi)=(w_{1}(\xi),w_{2}(\xi),...,w_{n}(\xi))^{T}=\bar{U}^{r}(\xi)-\underline{U}(\xi)$,
then $w_{i}(\xi)$, $i=1,2,...,n$ satisfies 

\[
\left\{ \begin{array}{l}
D_{i}w_{i}''-cw_{i}'+\frac{\partial F_{i}}{\partial u_{i}}w_{i}\leq D_{i}w_{i}''-cw_{i}'+\Sigma_{j=1}^{n}\frac{\partial F_{i}}{\partial u_{j}}w_{j}\leq0,\\
\\w_{i}(\xi_{1})=0,\; w_{j}(\xi)\geq0\:\mbox{\, f\mbox{or}}\:\xi\in[-N-\mu,N-\mu],\end{array}\right.\]
the Maximum principle further implies that $w_{i}\equiv0$ for $\xi\in[-N-\mu,N-\mu]$,
but this is in contradiction with \eqref{eq:2.15} on the boundary
points $\xi=-N-\mu$ and $\xi=N-\mu$. So we can decrease $\mu$ all
the way to zero. This proves the Lemma.\end{proof}
\begin{lem}
\label{lem:5}There exists a $\nu\geq0$ such that $(\bar{u},\bar{v})^{T}(\xi+\nu)\geq(\underbar{u},\underbar{v})^{T}(\xi)\:\mbox{\, f\mbox{or}}\:\xi\in\mathbb{R}.$\end{lem}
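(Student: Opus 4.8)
The plan is to reduce the vector inequality to a single scalar comparison between the two K.P.P. profiles $\bar{\bar{u}}$ and $\underline{u}$, and then to read off the shift $\nu$ from the asymptotics of Lemma \ref{lem:2}. Two elementary observations make the reduction. Since $\underline{u}$ is strictly increasing with $\underline{u}(+\infty)=1$, we have $\underline{u}(\xi)<1$ for every $\xi\in\mathbb{R}$, so for real $A$ and $B<1$ the equivalence $\min\{A,1\}\ge B\iff A\ge B$ holds. Also \textbf{[H2]} forces the lower bound $\frac{r(a_{2}-1)-(1-a_{1})}{1-a_{1}+r}$ for $l$ to be nonnegative, hence $l\ge0$ and $1+l\ge1$. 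Combining these facts with the definitions \eqref{eq:2.10} of $\bar{u}=\min\{\bar{\bar{u}},1\}$, $\bar{v}=\min\{(1+l)\bar{\bar{u}},1\}$ and $\underline{v}=\underline{u}$, the claim $(\bar{u},\bar{v})^{T}(\xi+\nu)\ge(\underline{u},\underline{v})^{T}(\xi)$ becomes equivalent to the single scalar estimate $\bar{\bar{u}}(\xi+\nu)\ge\underline{u}(\xi)$ for every $\xi\in\mathbb{R}$.

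To control the tail $\xi\to-\infty$ I would use Lemma \ref{lem:2}. The linearization of both \eqref{eq:2.08} and \eqref{eq:2.09} at the zero state is $w''-cw'+(1-a_{1})w=0$, so the two profiles decay at $-\infty$ with the same exponential rate $\lambda_{1}=(c-\sqrt{c^{2}-4(1-a_{1})})/2$ in the non-critical case, and with the same prefactor $\xi e^{\sqrt{1-a_{1}}\xi}$ in the critical case $c=c^{*}$. Consequently $\bar{\bar{u}}(\xi)/\underline{u}(\xi)$ has a finite positive limit $K$ as $\xi\to-\infty$, and since translating the numerator by $\nu$ introduces a further factor tending to $e^{\lambda_{1}\nu}$ (respectively $e^{\sqrt{1-a_{1}}\nu}$), we get $\bar{\bar{u}}(\xi+\nu)/\underline{u}(\xi)\to Ke^{\lambda_{1}\nu}$. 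Choosing $\nu_{1}$ large enough that this limit is $>1$ produces an $M>0$ with $\bar{\bar{u}}(\xi+\nu_{1})\ge\underline{u}(\xi)$ for $\xi\le-M$; because $\bar{\bar{u}}$ is increasing, the inequality on $(-\infty,-M]$ survives every larger shift $\nu\ge\nu_{1}$.

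On $[-M,+\infty)$ I would argue by monotonicity. If $l=0$ then \eqref{eq:2.09} is identical to \eqref{eq:2.08}, so $\bar{\bar{u}}$ is a translate of $\underline{u}$ and a suitably large $\nu\ge0$ ends the proof at once; hence assume $l>0$. Then $\bar{\bar{u}}(+\infty)=\frac{1-a_{1}}{1-a_{1}-l}>1$, so there is $N_{2}$ with $\bar{\bar{u}}(N_{2})=1$, and for any $\nu\ge N_{2}+M$ and any $\xi\ge-M$ one has $\bar{\bar{u}}(\xi+\nu)\ge\bar{\bar{u}}(-M+\nu)\ge\bar{\bar{u}}(N_{2})=1>\underline{u}(\xi)$. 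Taking $\nu=\max\{\nu_{1},N_{2}+M\}$ then gives $\bar{\bar{u}}(\xi+\nu)\ge\underline{u}(\xi)$ on all of $\mathbb{R}$, which by the reduction above is the assertion of the lemma. The one step I expect to require care is the start of the tail estimate: one must verify that $\bar{\bar{u}}$ and $\underline{u}$ genuinely share the same decay exponent (and the same critical prefactor), since this is exactly what allows a \emph{finite} translation to dominate the slower-decaying of the two tails; the remaining steps are monotonicity plus an elementary limit argument.
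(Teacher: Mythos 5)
Your proof is correct, but it closes the argument by a genuinely different route than the paper. The paper also shifts the upper solution and handles the $-\infty$ tail exactly as you do, via the Lemma \ref{lem:2} asymptotics (both profiles decay with the same rate $\frac{c-\sqrt{c^{2}-4(1-a_{1})}}{2}$, so a finite shift inflates the prefactor), and at $+\infty$ it uses that $(\bar{u},\bar{v})\equiv(1,1)$ eventually while $(\underline{u},\underline{v})<(1,1)$; but on the remaining compact interval it invokes the sliding-domain comparison Lemma \ref{lem:4} (a maximum-principle argument using the quasimonotone structure) to propagate the ordering from the endpoints. You avoid any comparison principle by exploiting the explicit construction \eqref{eq:2.10}: since $\underline{u}<1$ on all of $\mathbb{R}$ and $1+l\geq1$, the vector ordering reduces to the scalar inequality $\bar{\bar{u}}(\xi+\nu)\geq\underline{u}(\xi)$, and because $\bar{\bar{u}}(+\infty)=\frac{1-a_{1}}{1-a_{1}-l}>1$ when $l>0$ (the case $l=0$ making $\bar{\bar{u}}$ a translate of $\underline{u}$ by the uniqueness in Lemma \ref{lem:2}), a sufficiently large shift pushes $\bar{\bar{u}}$ above $1$ on $[-M,+\infty)$, which trivially dominates $\underline{u}$ there. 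What your route buys is a more elementary proof that treats the critical speed $c=2\sqrt{1-a_{1}}$, both signs of $r(a_{2}-1)-1$, and the borderline choice $l=0$ uniformly, whereas the paper writes out only the case $c>2\sqrt{1-a_{1}}$, $r(a_{2}-1)\leq1$ and appeals to similarity; what the paper's route buys is robustness, since Lemma \ref{lem:4} would still close the compact-interval comparison for upper and lower solutions lacking your explicit scalar K.P.P.\ structure. The caveat you flag is indeed the crux and it checks out: \eqref{eq:2.08} and \eqref{eq:2.09} have the same linearization $w''-cw'+(1-a_{1})w=0$ at the zero state, so Lemma \ref{lem:2} gives identical decay exponents at $-\infty$ (and, at the critical speed, the same $\xi e^{\sqrt{1-a_{1}}\xi}$ prefactor with negative slope coefficient), which is exactly what makes the finite-shift tail estimate legitimate; just take the final $\nu$ as the maximum with $0$ so that the stated requirement $\nu\geq0$ is met.
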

\begin{proof}
We only prove for the wave speed $c>2\sqrt{1-a_{1}}$ and $r(a_{2}-1)\leq1$
since it is similar to show the other cases. We first derive the asymptotic
behaviors of the upper- and lower-solutions at infinities. By Lemma
\ref{lem:2}, we have the following asymptotics for the upper and
lower solutions \begin{equation}
\left(\begin{array}{c}
\bar{u}\\
\\\bar{v}\end{array}\right)(\xi)=\left(\begin{array}{c}
A_{1}\\
\\(1+l)A_{1}\end{array}\right)e^{\frac{c-\sqrt{c^{2}-4(1-a_{1})}}{2}\xi}+o(e^{\frac{c-\sqrt{c^{2}-4(1-a_{1})}}{2}\xi})\label{eq:2.16}\end{equation}
 and

\begin{equation}
\left(\begin{array}{c}
\underline{u}\\
\\\underline{v}\end{array}\right)(\xi)=\left(\begin{array}{c}
B_{1}\\
\\B_{1}\end{array}\right)e^{\frac{c-\sqrt{c^{2}-4(1-a_{1})}}{2}\xi}+o(e^{\frac{c-\sqrt{c^{2}-4(1-a_{1})}}{2}\xi})\label{eq:2.17}\end{equation}
 as $\xi\rightarrow-\infty$; 

\begin{equation}
\left(\begin{array}{c}
\bar{u}\\
\\\bar{v}\end{array}\right)(\xi)\equiv\left(\begin{array}{c}
1\\
\\1\end{array}\right)\label{eq:2.18}\end{equation}
 and

\begin{equation}
\left(\begin{array}{c}
\underline{u}\\
\\\underline{v}\end{array}\right)(\xi)=\left(\begin{array}{c}
1\\
\\1\end{array}\right)-\left(\begin{array}{c}
B_{2}\\
\\B_{2}\end{array}\right)e^{\frac{c-\sqrt{c^{2}+4r(a_{2}-1)}}{2}\xi}+o(e^{\frac{c-\sqrt{c^{2}+4r(a_{2}-1)}}{2}\xi})\label{eq:2.19}\end{equation}
 as $\xi\rightarrow+\infty$, where $A_{1}$, $A_{2}$, $B_{1}$,
\textbf{$B_{2}$} are positive constants.

Since (\ref{eq:2.06}) is translation invariant, $\bar{v}^{\tilde{r}}(\xi)\equiv\bar{v}(\xi+\tilde{r})$
is also a solution of (\ref{eq:2.06}) for any $\tilde{r}\in\mathbb{R}$.
It then follows that $(\bar{u}^{\tilde{r}},\bar{v}^{\tilde{r}})^{T}(\xi)$
is also an upper-solution pair for system \eqref{eq:1.04}-\eqref{eq:1.05}.
For the asymptotic behavior of $(\bar{u},\bar{v})^{\tilde{r}}(\xi)$
at $-\infty$, we can simply replace $(A_{1},\:(1+l)A_{1})^{T}$ by
$(A_{1},\:(1+l)A_{1})^{T}e^{\frac{c-\sqrt{c^{2}-4(1-a_{1})}}{2}\,\tilde{r}}$
in \eqref{eq:2.16}. Now we choose $\tilde{r}>0$ large enough such
that

\[
e^{\frac{c-\sqrt{c^{2}-4(1-a_{1})}}{2}\,\tilde{r}}>1.\]
 Then there exists a sufficiently large $N_{1}>0$ such that

\begin{equation}
\left(\begin{array}{c}
\bar{u}^{\tilde{r}}(\xi)\\
\\\bar{v}^{\tilde{r}}(\xi)\end{array}\right)>\left(\begin{array}{c}
\underline{u}(\xi)\\
\\\underline{v}(\xi)\end{array}\right)\quad{\normalcolor \mbox{for}\;}\xi\in(-\infty,-N_{1}].\label{eq:2.20}\end{equation}

On the other hand, the boundary conditions of the upper- and lower-solutions
at $+\infty$ also imply, on increasing $N_{1}$ if necessary, that

\begin{equation}
\left(\begin{array}{c}
\bar{u}^{\tilde{r}}(\xi)\\
\\\bar{v}^{\tilde{r}}(\xi)\end{array}\right)>\left(\begin{array}{c}
\underline{u}(\xi)\\
\\\underline{v}(\xi)\end{array}\right)\quad{\normalcolor \mbox{for}\;}\xi\in[N_{1},\:+\infty).\label{eq:2.21}\end{equation}

On the interval $[-N_{1},N_{1}],$ the strict monotonicity of the
upper and lower solutions $(\bar{u}^{\tilde{r}},\bar{v}^{\tilde{r}})^{T}$
and $(\underline{u},\underline{v})^{T}$, and the inequalities \eqref{eq:2.20}-\eqref{eq:2.21}
imply that 

\[
(\underline{u},\underline{v})^{T}(-N_{1})<(\bar{u}^{\tilde{r}},\bar{v}^{\tilde{r}})^{T}(\xi)\quad\mbox{for}\,\xi\in(-N_{1},N_{1}],\]
and

\[
(\underline{u},\underline{v})^{T}(\xi)<(\bar{u}^{\tilde{r}},\bar{v}^{\tilde{r}})^{T}(N_{1})\quad\mbox{for}\,\xi\in[-N_{1},N_{1}).\]
Therefore, by Lemma \ref{lem:4} we have

\begin{equation}
(\underline{u},\underline{v})^{T}(\xi)\leq\bar{(\bar{u}^{\tilde{r}},\bar{v}^{\tilde{r}})^{T}(\xi)}\quad\mbox{for}\,\xi\in[-N_{1},N_{1}].\label{eq:2.22}\end{equation}

Inequality \eqref{eq:2.22} along with \eqref{eq:2.20}, \eqref{eq:2.21}
show the validity of the Lemma.
\end{proof}
\textbf{Proof of the Existence:} We still use $(\bar{u},\bar{v})^{T}(\xi)$
to denote the shifted upper-solution as given in lemma \ref{lem:4}.
Applying the monotone iteration method given in \cite{WuZou} to the
upper and lower solutions defined in \eqref{eq:2.10}, we then have
the existence of the traveling wave solutions for $c\geq2\sqrt{a_{1}-1}$.
The boundary conditions that the upper and lower solutions satisfying
lead to the boundary conditions \eqref{eq:1.05} for traveling waves. 

\medskip{}

\subsection{\label{sub:2.2}The asymptotics and the monotonicity.}

\medskip{}

To derive the asymptotic decay rate of the traveling wave solution
at $\pm\infty$, we let $c\geq2\sqrt{1-a_{1}}$ and \begin{equation}
U(\xi):=(u(\xi),v(\xi))^{T}\quad\mbox{for}\:-\infty<\xi<\infty\label{eq:2.23}\end{equation}
be the corresponding traveling wave solution of \eqref{eq:1.04}-\eqref{eq:1.05}
resulted from the monotone iteration. Lemma \ref{lem:2} implies that
the upper- and the lower-solutions as derived in Lemma \ref{lem:3}
have the same asymptotic rates at $-\infty$. \eqref{eq:1.06} and
\eqref{eq:1.08} then follow from Lemma \ref{lem:5}. We differentiate
\eqref{eq:1.04} with respect to $\xi$, and note that $(U(\xi))':=(w_{1},w_{2})^{T}(\xi)$
satisfies \begin{equation}
(w_{1})_{\xi\xi}-c(w_{1})_{\xi}+A_{11}(u,v)w_{1}+A_{12}(u,v)w_{2}=0,\label{eq:2.24}\end{equation}
 \begin{equation}
(w_{2})_{\xi\xi}-c(w_{2})_{\xi}+A_{21}(u,v)w_{1}+A_{22}(u,v)w_{2}=0,\label{eq:2.25}\end{equation}
 where

\[
\begin{array}{ll}
A_{11}(u,v)=1-a_{1}-2u+a_{1}v,\quad & A_{12}(u,v)=a_{1}u\\
\\A_{21}(u,v)=a_{2}r(1-v), & A_{22}(u,v)=-r(a_{2}u+1-2v)\end{array}\]

We next study the exponential decay rates of the traveling wave solution
$U(\xi)$ at $+\infty$. The asymptotic system of (\ref{eq:2.24})
and (\ref{eq:2.25}) as $\xi\rightarrow+\infty$ is \begin{equation}
\left\{ \begin{array}{l}
(\psi_{1})''-c(\psi_{1})'-\psi_{1}+a_{1}\psi_{2}=0,\\
\\(\psi_{2})''-c(\psi_{2})'-r(a_{2}-1)\psi_{2}=0.\end{array}\right.\label{eq:2.26}\end{equation}

It is easy to see that the system \eqref{eq:2.26} admits exponential
dichotomy. Since the traveling wave solution $(u(\xi),v(\xi))^{T}$
converges monotonically to a constant limit as $\xi\rightarrow\pm\infty$,
the derivative of the traveling wave solution satisfies $(w_{1}(\pm\infty),w_{2}(\pm\infty))=(0,0)$
(\cite{WuZou}, p$658$ Lemma $3.2$). Hence we are only interested
in finding exponentially decaying solutions of \eqref{eq:2.26} at
$+\infty$.

One can write the the general solution of the second equation of \eqref{eq:2.26}
as \[
\psi_{2}=A^{1}e^{\frac{c+\sqrt{c^{2}+4r(a_{2}-1)}}{2}\xi}+B^{1}e^{\frac{c-\sqrt{c^{2}+4r(a_{2}-1)}}{2}\xi}\]
 for some constants $A^{1}$ and $B^{1}$. Since $w_{2}\rightarrow0$
as $\xi\rightarrow+\infty$, one immediately has $A^{1}=0$.

We then study the solution of the second equation of (\ref{eq:2.26}),
rewriting the equation as \begin{equation}
(\psi_{1})''-c(\psi_{1})'-\psi_{1}=-a_{1}\psi_{2},\label{eq:2.27}\end{equation}
we have the following expression for the solution of \eqref{eq:2.27},
\begin{equation}
\psi_{1}=\bar{B}_{1}e^{\frac{c-\sqrt{c^{2}+4r(a_{2}-1)}}{2}\xi}+\bar{B}_{2}e^{\frac{c-\sqrt{c^{2}+4}}{2}\xi}+\bar{B}_{3}e^{\frac{c+\sqrt{c^{2}+4}}{2}\xi}.\label{eq:2.28}\end{equation}

Since $w_{2}(\xi)\rightarrow0$ as $\xi\rightarrow+\infty$, then
$\bar{B}_{3}=0.$ Also noticing that (\ref{eq:2.27}) is non-homogeneous,
we have $\bar{B}_{1}\neq0$. By roughness of the exponential dichotomy
(levinson) and integration, we obtain the asymptotic decay rate of
the traveling wave solutions at $+\infty$ given in (\ref{eq:1.10}).

We next show the monotonicity of the traveling wave solutions. By
the monotone iteration process \cite{WuZou}, the traveling wave solution
$U(\xi)$ is increasing for $\xi\in\mathbb{R}$, it then follows that
$W(\xi)=U'(\xi)\geq0$ satisfying \eqref{eq:2.24} and \eqref{eq:2.25}
and

\begin{equation}
w_{1},w_{2}\geq0,\;(w_{1},w_{2})^{T}(\pm\infty)=0.\label{eq:2.31}\end{equation}
 The Maximum Principle implies that $(w_{1},w_{2})^{T}(\xi)>0$ for
$\xi\in\mathbb{R}$. This concludes that the traveling wave solution
is strictly increasing on $\mathbb{R}$.

\subsection{The Uniqueness.}

On the uniqueness of the traveling wave solution for every $c\geq2\sqrt{1-a_{1}}$,
we only prove the conclusion for traveling wave solutions with asymptotic
behaviors \eqref{eq:1.06} and \eqref{eq:1.07}, since other case
can be proved similarly. Let $U_{1}(\xi)$ and $U_{2}(\xi)$ be two
traveling wave solutions of system \eqref{eq:1.04}-\eqref{eq:1.05}
with the same speed $c>2\sqrt{1-a_{1}}$. There exist positive constants
$A_{i}$, $B_{i}$, $i=1,2,3,4$ and a large number $N>0$ such that
for $\xi<-N$,\begin{equation}
U_{1}(\xi)=\left(\begin{array}{c}
(A_{1}+o(1))e^{\frac{c-\sqrt{c^{2}-4(1-a_{1})}}{2}\xi}\\
\\(A_{2}+o(1))e^{\frac{c-\sqrt{c^{2}-4(1-a_{1})}}{2}\xi}\end{array}\right)\label{eq:2.32}\end{equation}
 \begin{equation}
U_{2}(\xi)=\left(\begin{array}{c}
(A_{3}+o(1))e^{\frac{c-\sqrt{c^{2}-4(1-a_{1})}}{2}\xi}\\
\\(A_{4}+o(1))e^{\frac{c-\sqrt{c^{2}-4(1-a_{1})}}{2}\xi}\end{array}\right);\label{eq:2.33}\end{equation}
 and for $\xi>N$,\begin{equation}
U_{1}(\xi)=\left(\begin{array}{c}
{\displaystyle 1-(B_{1}+o(1))e^{\frac{c-\sqrt{c^{2}+4r(a_{2}-1)}}{2}\xi}}\\
\\{\displaystyle 1-(B_{2}+o(1))e^{\frac{c-\sqrt{c^{2}+4r(a_{2}-1)}}{2}\xi}}\end{array}\right)\label{eq:2.34}\end{equation}
 \begin{equation}
U_{2}(\xi)=\left(\begin{array}{c}
{\displaystyle 1-(B_{3}+o(1))e^{\frac{c-\sqrt{c^{2}+4r(a_{2}-1)}}{2}\xi}}\\
\\{\displaystyle 1-(B_{4}+o(1))e^{\frac{c-\sqrt{c^{2}+4r(a_{2}-1)}}{2}\xi}}\end{array}\right)\label{eq:2.35}\end{equation}
 The traveling wave solutions of system \eqref{eq:1.04}\eqref{eq:1.05}
are translation invariant, thus for any $\theta>0$, $U_{1}^{\theta}(\xi):=U_{1}(\xi+\theta)$
is also a traveling wave solution of \eqref{eq:1.04} and \eqref{eq:1.05}.
By \eqref{eq:2.32} and \eqref{eq:2.34}, the solution $U_{1}(\xi+\theta)$
has the following asymptotic behaviors:\begin{equation}
U_{1}^{\theta}(\xi)=\left(\begin{array}{c}
(A_{1}+o(1))e^{\frac{c-\sqrt{c^{2}-4(1-a_{1})}}{2}\theta}e^{\frac{c-\sqrt{c^{2}-4(1-a_{1})}}{2}\xi}\\
\\(A_{2}+o(1))e^{\frac{c-\sqrt{c^{2}-4(1-a_{1})}}{2}\theta}e^{\frac{c-\sqrt{c^{2}-4(1-a_{1})}}{2}\xi}\end{array}\right)\label{eq:2.36}\end{equation}
 for $\xi\leq-N$;\begin{equation}
U_{1}^{\theta}(\xi)=\left(\begin{array}{c}
{\displaystyle 1-(B_{1}+o(1))e^{\frac{c-\sqrt{c^{2}+4r(a_{2}-1)}}{2}\theta}e^{\frac{c-\sqrt{c^{2}+4r(a_{2}-1)}}{2}\xi}}\\
\\{\displaystyle 1-(B_{2}+o(1))e^{\frac{c-\sqrt{c^{2}+4r(a_{2}-1)}}{2}\theta}e^{\frac{c-\sqrt{c^{2}+4r(a_{2}-1)}}{2}\xi}}\end{array}\right)\label{eq:2.37}\end{equation}
 for $\xi\geq N$.

It is clear that for $\theta$ large enough, we have \begin{equation}
A_{1}e^{\frac{c-\sqrt{c^{2}-4(1-a_{1})}}{2}\theta}>A_{3},\label{eq:2.38}\end{equation}
 \begin{equation}
A_{2}e^{\frac{c-\sqrt{c^{2}-4(1-a_{1})}}{2}\theta}>A_{4},\label{eq:2.39}\end{equation}
 \begin{equation}
B_{1}e^{\frac{c-\sqrt{c^{2}+4r(a_{2}-1)}}{2}\theta}<B_{3},\label{eq:2.40}\end{equation}
 \begin{equation}
B_{2}e^{\frac{c-\sqrt{c^{2}+4r(a_{2}-1)}}{2}\theta}<B_{4}.\label{eq:2.41}\end{equation}
 Inequalities \eqref{eq:2.38} - \eqref{eq:2.41} imply that for $\theta$
large enough, \begin{equation}
U_{1}^{\theta}(\xi)>U_{2}(\xi)\label{eq:2.42}\end{equation}
 for $\xi\in(-\infty,-N]$$\cup$$[N+\infty).$ We now consider $U_{1}^{\theta}(\xi)$
and $U_{2}(\xi)$ on $[-N,+N]$. 

On the interval $[-N_{1},N_{1}],$ the strict monotonicity of $U_{1}$
and $U_{2}$ and the inquality \eqref{eq:2.42} imply that 

\[
U_{2}(-N_{1})<U_{1}^{\theta}(\xi)\quad\mbox{for}\,\xi\in(-N_{1},N_{1}],\]
and

\[
U_{2}(\xi)<U_{1}^{\theta}(N_{1})\quad\mbox{for}\,\xi\in[-N_{1},N_{1}).\]
Therefore, by Lemma \ref{lem:4} we have

\begin{equation}
U_{1}^{\theta}(\xi)\geq U_{2}(\xi)\quad\mbox{for}\,\xi\in[-N_{1},N_{1}].\label{eq:2.43}\end{equation}

Inequalities \eqref{eq:2.43} along with \eqref{eq:2.42} show the
validity of the Lemma.

\subsection{The range of the wave speed.}

The next Theorem shows that the lower bound $2\sqrt{1-a_{1}}$ for
the wave speed is optimal, hence $c=2\sqrt{\alpha}$ is the critical
wave speed.
\begin{lem}
\label{lem:6-1}There is no monotone traveling wave solution of \eqref{eq:1.04}-\eqref{eq:1.05}
for any $0<c<2\sqrt{\alpha}$. \end{lem}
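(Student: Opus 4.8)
The plan is to argue by contradiction: suppose there exists a monotone traveling wave solution $(u(\xi),v(\xi))^T$ of \eqref{eq:1.04}--\eqref{eq:1.05} for some speed $c$ with $0<c<2\sqrt{\alpha}$, where $\alpha=1-a_1$. The main idea is to exploit the first equation of \eqref{eq:1.04}. Since $u$ is monotone increasing from $0$ to $1$ and $v$ is monotone increasing from $0$ to $1$, the coupling term $a_1 v$ is nonnegative, so $u$ is a (sub)solution of a scalar KPP-type equation. More precisely, near $\xi\to-\infty$ we have $u,v\to 0$, so the first equation linearizes to $u''-cu'+(1-a_1)u\approx 0$, i.e.\ $u''-cu'+\alpha u\approx 0$. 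The characteristic equation $\lambda^2-c\lambda+\alpha=0$ has roots $\lambda_\pm=\tfrac{c\pm\sqrt{c^2-4\alpha}}{2}$, which are \emph{complex} when $c<2\sqrt{\alpha}$. A positive solution decaying to $0$ at $-\infty$ cannot have a leading term of the form $e^{(\mathrm{Re}\,\lambda)\xi}(A\cos(\omega\xi)+B\sin(\omega\xi))$ with $\omega\neq 0$, because such a function necessarily changes sign for $\xi$ near $-\infty$. This is the classical obstruction to slow waves for KPP.

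First I would make the linearization rigorous. Fix $\varepsilon>0$ small. Since $v(\xi)\to 0$ as $\xi\to-\infty$, there is $M$ such that for $\xi<-M$ we have $a_1 v(\xi)<\varepsilon$ and also $u(\xi)<\varepsilon$, hence the first equation of \eqref{eq:1.04} gives
\begin{equation}
u''-cu'+u\bigl(1-a_1-u+a_1v\bigr)=0,\qquad 1-a_1-u+a_1 v\ge 1-a_1-\varepsilon,\nonumber
\end{equation}
so $u''-cu'+(\alpha-\varepsilon)u\le 0$ for $\xi<-M$, i.e.\ $u$ is a nonnegative supersolution of the linear operator $L_\varepsilon w:=w''-cw'+(\alpha-\varepsilon)w$ on $(-\infty,-M)$. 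Choosing $\varepsilon$ small enough that $c<2\sqrt{\alpha-\varepsilon}$ still holds (possible since the inequality $c<2\sqrt{\alpha}$ is strict), the operator $L_\varepsilon$ has purely complex characteristic roots with positive real part $c/2$ and nonzero imaginary part $\omega_\varepsilon=\tfrac12\sqrt{4(\alpha-\varepsilon)-c^2}>0$. Then I would invoke the standard comparison/oscillation argument: compare $u$ on a long interval $[\xi_0-\pi/\omega_\varepsilon,\xi_0]$ with the solution of $L_\varepsilon w=0$ that touches $u$ appropriately, or more directly, observe that if $u>0$ solves $L_\varepsilon u\le 0$ on a half-line then $u$ cannot be bounded (in fact cannot stay positive) — the sign-changing sinusoidal factor forces $u$ to vanish somewhere, contradicting $u>0$. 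A clean way is: set $\phi(\xi)=e^{-c\xi/2}u(\xi)$; then $\phi''+(\alpha-\varepsilon-c^2/4)\phi\le 0$ with $\alpha-\varepsilon-c^2/4>0$, so $\phi$ is a positive supersolution of $\phi''+\omega_\varepsilon^2\phi\le 0$ on $(-\infty,-M)$, and by a Sturm comparison with $\sin(\omega_\varepsilon(\xi-\xi_0))$ we get that $\phi$ (hence $u$) must change sign on any interval of length $\ge \pi/\omega_\varepsilon$, contradicting positivity of $u$.

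The key steps in order: (1) use the boundary behavior at $-\infty$ to reduce, on a half-line, to a linear differential \emph{inequality} with constant coefficients arbitrarily close to those of the linearization at $(0,0)$; (2) choose $\varepsilon$ so that the relevant characteristic roots remain complex; (3) apply the exponential change of variable $\phi=e^{-c\xi/2}u$ to turn this into $\phi''+\omega_\varepsilon^2\phi\le 0$ with $\omega_\varepsilon>0$; (4) apply the Sturm comparison theorem against $\sin(\omega_\varepsilon(\xi-\xi_0))$ to conclude $\phi$ must vanish, contradicting $u>0$ on $\mathbb{R}$. I expect the main obstacle — really the only subtle point — to be step (4): making precise the Sturm-type oscillation argument and the choice of the comparison interval far enough out at $-\infty$, and being careful that the inequality $L_\varepsilon u\le 0$ (not equality) still forces oscillation of a positive solution; this is standard for the operator $\phi''+\omega^2\phi$ but should be stated carefully. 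The hypotheses \textbf{[H1]--[H3]} are used only insofar as they guarantee $\alpha=1-a_1>0$ so that $2\sqrt{\alpha}$ is real and positive; the monotonicity of $v$ (only its nonnegativity and its limit $0$ at $-\infty$, both from \eqref{eq:1.05}) is what lets us drop the coupling term favorably.
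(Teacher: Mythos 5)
Your proposal is correct, but it reaches the conclusion by a different mechanism than the paper. The paper argues via the asymptotics machinery of Subsection \ref{sub:2.2}: it writes the decaying solution at $-\infty$ as a linear combination of the modes $e^{\frac{c\pm\sqrt{c^2-4(1-a_1)}}{2}\xi}$ of the linearization at $(0,0)$ plus higher-order terms, notes that for $0<c<2\sqrt{1-a_1}$ these exponents are complex, and concludes that the wave oscillates near $-\infty$, contradicting monotonicity. You instead never invoke an asymptotic expansion: you reduce the first equation to the scalar differential \emph{inequality} $u''-cu'+(\alpha-\varepsilon)u\le 0$ on a half-line (using only $u<\varepsilon$, $v\ge 0$ there, both available for a monotone wave with the boundary conditions \eqref{eq:1.05}), pass to $\phi=e^{-c\xi/2}u$ to get $\phi''+\omega_\varepsilon^2\phi\le 0$ with $\omega_\varepsilon>0$, and kill positivity by Sturm comparison with $\sin(\omega_\varepsilon(\xi-\xi_0))$ on an interval of length $\pi/\omega_\varepsilon$. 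Both proofs rest on the same spectral fact (complex roots of $\lambda^2-c\lambda+(1-a_1)$ when $c<2\sqrt{1-a_1}$), but yours is more elementary and more self-contained: it does not require justifying that the nonlinear solution actually inherits the oscillatory leading-order expansion (the step the paper leaves implicit, including the claim that the relevant coefficients do not vanish), and it in fact excludes any nonnegative wave, not just monotone ones. The only point you should make explicit is strict positivity of $u$ on the comparison interval: from monotonicity $0\le u\le 1$, and if $u(\xi_0)=0$ at some point then $u\equiv 0$ to the left of $\xi_0$, so $u(\xi_0)=u'(\xi_0)=0$ and uniqueness for the ODE $u''=cu'-u(1-a_1-u+a_1v)$ (with $v$ regarded as a given continuous coefficient) forces $u\equiv 0$, contradicting $u(+\infty)=1$; with that one line added, the Sturm step does contradict positivity and the argument is complete.
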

\begin{proof}
Suppose there is a constant $c$ with $0<c<2\sqrt{1-a_{1}}$ and a
solution $V(\xi)=(v_{1},v_{2})^{T}(\xi)$ of (\ref{eq:1.04}-\ref{eq:1.05})
corresponding to it. Similar to \ref{sub:2.2}, the asymptotic behaviors
of $V(\xi)$ at $-\infty$ are described by \[
\left(\begin{array}{c}
v_{1}(\xi)\\
\\v_{2}(\xi)\end{array}\right)=\left(\begin{array}{c}
A_{s}\\
\\B_{s}\end{array}\right)e^{\frac{c-\sqrt{c^{2}-4(1-a_{1})}}{2}\xi}+\left(\begin{array}{c}
\bar{A_{s}}\\
\\\bar{B_{s}}\end{array}\right)e^{\frac{c+\sqrt{c^{2}-4(1-a_{1})}}{2}\xi}+h.o.t,\]
 where $(A_{s},B_{s})^{T}$and $(\bar{A_{s},}\bar{B_{s}})$ are not
both zero. The condition $0<c<2\sqrt{1-a_{1}}$ implies that $V(\xi)$
is oscillating. This concludes that any solution of \eqref{eq:1.04}-\eqref{eq:1.05}
with $c<2\sqrt{1-a_{1}}$ is not strictly monotonic.  \end{proof}

\end{document}